\documentclass[10pt,a4paper]{article}
\usepackage{CJKutf8}
\usepackage{latexsym}
\usepackage{amsmath}
\usepackage{amssymb}
\usepackage{amsthm}
\usepackage{amscd,color}

\newtheorem{theorem}{Theorem}[section]
\newtheorem{lemma}[theorem]{Lemma}

\theoremstyle{definition}

\newtheorem{remark}[theorem]{Remark}

\theoremstyle{remark}

\begin{document} 
\begin{CJK}{UTF8}{min}

\title{Fractional Powers of Operators: Characterization via $\delta$-Regularized Logarithmic Representation}
\author{
Yoritaka Iwata\\
Osaka University of Economics and Law, \\
Gakuonji 6-10, Yao, Osaka 581-0853, Japan. 
\thanks{
E-mail address: 
{\bf y-iwata@s.keiho-u.ac.jp}}}

\date{}
\maketitle

\thanks{Mathematics Subject Classification 2010 : 26A33, 30H25. 47A60, 47D60}

\begin{abstract}
For the fractional powers of operators, the classical formulation by V. Balakrishnan inherently requires positivity or specific sectorial conditions on the generator (i.e., the generation of analytic semigroups).
To overcome these limitations, this paper presents a novel approach to constructing fractional powers of operators $D^k$ through the $\delta$-regularized logarithmic representation of infinitesimal generators $D$ within the framework of $C^0$-semigroup theory on Banach spaces.
The proposed method bypasses the conventional geometric constraints by employing an algebraic $\delta$-regularization for bounded operators.
Specifically, by applying a complex-analytic logarithmic representation to a family of bounded operators derived from the resolvent of the semigroup, the fractional powers for generators of general $C^0$-semigroups are proved to be uniquely and rigorously well-defined under the minimal algebraic assumption of invertibility, and completely independent of the choice of the regularization parameter $\delta > 0$.
The theoretical framework established in this study provides extensive potential for applications, including regularity estimates for non-analytic semigroups and non-autonomous systems where the infinitesimal generators depend explicitly on the time variable.
\end{abstract}

\vspace{2mm}
\noindent {\bf Key words and phrases}: Fractional powers of operators, Strongly continuous semigroups, Infinitesimal generators, Logarithmic representation, Regularization of operators.
\vspace{2mm}

\section{Introduction}
Fractional powers of operators provide an indispensable tool for analyzing complex phenomena that escape the grasp of conventional integer-order powers, as well as for discussing the regularity of solutions to abstract evolution equations. 
Within the framework of operator semigroup theory, the origin of fractional powers can be traced back to the formulation introduced by V. Balakrishnan~\cite{60Balakrishnan} and systematized by K. Yosida~\cite{65yosida}, given by the following expression:
\begin{equation} \label{balak}
(-A)^{\alpha} u = \frac{1}{\Gamma(-\alpha)} \int_0^\infty t^{-\alpha-1} (U(t)-I) u \, dt \quad (0 < \alpha < 1),
\end{equation}
where $\alpha$ is a real number (specifically $0 < \alpha < 1$), and $u$ belongs to the domain $D(A) \subset X$. 
Here, $A$ is the infinitesimal generator of a strongly continuous semigroup $\{ U(t) \}_{t \ge 0}$ on a Banach space $X$ (in the following, $C^0$-semigroup), which provides the solution representation for the abstract evolution equation $du/dt = Au$.
However, for Eq.~\eqref{balak} to hold, it is an essential assumption that the operator $-A$ satisfies a non-negativity (or sectorial) condition; that is, its resolvent set contains the open right half-plane and satisfies the condition
\begin{equation}
\sup_{\operatorname{Re}(\lambda)>0} | \operatorname{Re} (\lambda)| \cdot
\|(\lambda I - A)^{-1} \| < \infty.
\end{equation}
This condition implies that the spectrum of $-A$ is contained within a sectorial region in the complex plane with its vertex at the origin (the sectorial condition). 
This requirement has been the primary reason why conventional theories have been inevitably restricted to the framework of analytic semigroups.

Tracing the theoretical background, the foundational idea of Eq.~\eqref{balak} inherently arises from Cauchy's integral formula for derivatives in complex analysis:
\begin{equation}
\frac{d^n f(z_0)}{dz^n} = \frac{n!}{2 \pi i} \int_{\Gamma} \frac{f(z)}{(z-z_0)^{n+1}} dz.
\end{equation}
By generalizing the discrete factorial $n!$ to the Euler gamma function $\Gamma(n+1)$, and extending the integer $n$ to a real number $\alpha$, a theoretical path was opened to define fractional powers. 
To transpose this complex-analytic concept into the operator-theoretic setting on Banach spaces, we employ the Riesz--Dunford functional calculus, expressing $(-A)^{-\alpha}$ via a contour integral involving the resolvent $(zI - A)^{-1}$. 
By substituting the Laplace transform representation of the resolvent, $(zI - A)^{-1} = \int_0^\infty e^{-zt} U(t) \, dt$, into the Dunford integral and exchanging the order of integration, the complex contour integral is successfully converted into a real-time integral over $t \in (0, \infty)$ for the fractional integral $(-A)^{-\alpha}$ with $0 < \alpha < 1$:
\begin{equation} \label{frac_int}
(-A)^{-\alpha} u = \frac{1}{\Gamma(\alpha)} \int_0^\infty t^{\alpha-1} U(t) u \, dt.
\end{equation}

To transition from the fractional integral $(-A)^{-\alpha}$ to the positive fractional power $(-A)^\alpha$, the operator is naturally decomposed into the generator $(-A)$ and a $(1-\alpha)$-th order integration:
\begin{equation} \label{decomp}
(-A)^\alpha = (-A) \cdot (-A)^{-(1-\alpha)}.
\end{equation}
Substituting Eq.~\eqref{frac_int} with $1-\alpha$ into Eq.~\eqref{decomp} yields a kernel factor $t^{(1-\alpha)-1} = t^{-\alpha}$. 
By applying the generator $(-A)$ to the semigroup via $-A U(t) u = -\frac{d}{dt} U(t) u$, integration by parts transfers the action of $(-A)$ onto the kernel, introducing an additional factor of $t^{-1}$ that yields $t^{-\alpha-1}$. 
Simultaneously, evaluating the boundary terms yields the difference operator $U(t) - I$, which effectively regularizes the behavior at $t = 0$. 
Finally, utilizing the functional equation of the gamma function, $\Gamma(1-\alpha) = -\alpha \Gamma(-\alpha)$, Balakrishnan successfully derived the formulation in Eq.~\eqref{balak}, where $\Gamma(-\alpha)$ and $t^{-\alpha-1}(U(t)-I)$ naturally appear.
In this manner, Balakrishnan's formulation beautifully unified the analytic continuation via the gamma function with the semigroup theory connected to the resolvent via the Laplace transform.

Despite its mathematical elegance, Balakrishnan's classical formulation encompasses inherent theoretical and practical drawbacks. 
First, the rigid requirement of sectoriality severely restricts its domain of applicability, leaving general non-analytic $C^0$-semigroups beyond its reach. 
Second, handling the domains of unbounded operators in fractional powers introduces considerable complexity when extending the theory to non-autonomous evolution equations—a challenge that necessitated the elaborate analytical frameworks of Kato~\cite{61kato,80kato} and Tanabe~\cite{79tanabe}. 
Furthermore, formulations relying on improper integral representations pose non-trivial obstacles for numerical approximations and the algebraic treatment of operator products, such as the Baker--Campbell--Hausdorff formula.

To circumvent these limitations, the construction of fractional powers via logarithmic operators was investigated by Okazawa~\cite{86okazawa, 96okazawa} for sectorial and accretive operators. 
However, these approaches still fundamentally rested upon geometric sectorial conditions on the spectrum, leaving the extension to general $C^0$-semigroups an open challenge.

To bridge this gap, this paper utilizes the operator logarithmic representation framework introduced in~\cite{23iwata}. 
The key strategy of our approach is to map the inherently unbounded generator $D$ into a family of bounded operators constructed via the resolvent of the semigroup. 
Through an algebraic $\delta$-regularization of these bounded operators, the conventional geometric constraints on the spectrum can be entirely bypassed. 
The main objective of this paper is to rigorously construct the fractional power $D^k$ for generators of general $C^0$-semigroups via this $\delta$-regularized logarithmic representation, and to prove that the resulting operator is uniquely and well-defined, completely independent of the choice of the regularization parameter $\delta > 0$.

\section{Mathematical Settings}
Let $X$ be a Banach space equipped with norm $\|\cdot\|_X$. 
As an auxiliary equation, let us consider a first-order homogeneous abstract evolution equation:
\begin{equation} \label{mastereq}
\begin{cases}
\dfrac{du}{d\lambda} - D u = 0, & \lambda \in (a,b], \\[2.5mm]
u(a) = u_a \in X,
\end{cases}
\end{equation}
in $X$, where $\lambda$ denotes the evolution parameter introduced specifically to facilitate the construction of fractional powers of operators.
The operator $D : D(D) \subset X \to X$ is an infinitesimal generator on $X$ that is explicitly independent of $\lambda$.
In the present study, while $D$ is assumed to generate a $C^0$-semigroup, it is not necessarily restricted to generating an analytic semigroup. 
An equation of the form~\eqref{mastereq} under this general setup is generally classified as a hyperbolic abstract evolution equation~\cite{70kato, 73kato}.
Let $\{ U(\lambda) \}_{\lambda \ge 0}$ denote the $C^0$-semigroup of operators on $X$ generated by $D$, so that $U(\lambda) \in B(X)$ for each $\lambda \ge 0$, where $B(X)$ denotes the space of bounded linear operators on $X$. 
Utilizing this evolution operator, the unique solution to Eq.~\eqref{mastereq} is explicitly represented as $u(\lambda) = U(\lambda - a) u_a$.

Since $U(\lambda) \in B(X)$, it has a non-empty resolvent set. 
For this bounded operator $U(\lambda)$, let us introduce a family of operators $I_{\eta}(\lambda)$, which plays a fundamental role in constructing the logarithmic representation of the generator $D$ discussed in the subsequent sections:
\begin{equation}
I_{\eta}(\lambda) = (I - \eta^{-1}U(\lambda))^{-1},
\end{equation}
where $I$ denotes the identity operator on $X$, and $\eta \in \mathbb{C}$ is a complex parameter chosen appropriately (e.g., $|\eta| > r(U(\lambda))$, where $r(\cdot)$ is the spectral radius) such that  $I_{\eta}(\lambda) \in B(X)$ exists and its operator logarithm can be rigorously defined. 
In the sense of scalar multiplication, the operator $I_{\eta}(\lambda)$ essentially represents the resolvent operator associated with $U(\lambda)$, as expressed by $I_{\eta}(\lambda) = \eta (\eta I - U(\lambda))^{-1}$.

\section{Lemmas: Logarithmic Representation of Generators}
In this section, as a preparation for constructing fractional powers in the subsequent section, we review the essential properties of the logarithmic representation of generators on Banach spaces~\cite{20iwata, 23iwata}.
While the infinitesimal generator $D$ is generally an unbounded operator, the semigroup $U(\lambda)$ consists of bounded linear operators on $X$.
The core strategy established in~\cite{20iwata, 23iwata} is to map the unbounded generator $D$ into a family of bounded operators constructed via the resolvent operator family $I_{\eta}(\lambda)$, thereby redefining the generator within a broader functional-analytic framework.

Following this framework, the generator $D$ can be reformulated using the operator logarithms of $I_{\eta}(\lambda)$, as rigorously stated in the following lemma.

\begin{lemma}[Representation of Infinitesimal Generators of One-Parameter Semigroups] \label{lem03}
Let $\{U(\lambda)\}_{\lambda \in [a, b]}$ be a one-parameter $C^0$-semigroup on a Banach space $X$ with its infinitesimal generator $D$ satisfying $0 \in \rho(D)$. 
Let $I_{\eta}(\lambda) = (I - \eta^{-1}U(\lambda))^{-1}$ be the family of resolvent operators determined by $U(\lambda)$ for a properly chosen complex parameter $\eta \in \mathbb{C}$. 
Let the operator $\mathcal{D}$ be defined by
\begin{equation} \label{eq-simp01}
\mathcal{D} := \lambda^{-1} \left( \operatorname{Log}[\eta(I_{\eta}-I)] - \operatorname{Log}(I_{\eta}) \right),
\end{equation}
where $\operatorname{Log}$ denotes the principal value of the operator logarithm. 
Then, the operator $\mathcal{D}$ acts as an infinitesimal generator explicitly independent of $\lambda$, coinciding with $D$ on its domain $D(D)$, and generates the one-parameter $C^0$-semigroup $U(\lambda)$ on $X$.
\end{lemma}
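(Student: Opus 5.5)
The plan is to reduce the identity \eqref{eq-simp01} to the operator identity $\operatorname{Log} U(\lambda) = \lambda D$, and then identify that logarithm with the generator through the semigroup structure.

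\textbf{Step 1: Algebraic simplification.} From $I_{\eta}(\lambda) = \eta(\eta I - U(\lambda))^{-1}$ we get
\[
I_{\eta} - I = (\eta I - U(\lambda))^{-1} U(\lambda),
\qquad\text{so}\qquad
\eta (I_{\eta} - I) = I_{\eta}\, U(\lambda).
\]
All the operators involved, namely $U(\lambda)$, $I_{\eta}$ and $\eta(I_{\eta}-I)$, are functions of the single bounded operator $U(\lambda)$, so they commute.

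\textbf{Step 2: Splitting the logarithm.} Choose $\eta$ so that $\sigma(I_\eta)$ and $\sigma(\eta(I_\eta - I))$ both avoid a common branch cut, e.g.\ a ray from the origin. The image of $\sigma(U(\lambda))$ under $z \mapsto \eta/(\eta - z)$ is then bounded away from $0$. Using the Riesz--Dunford calculus for the commuting pair, I would write
\[
\operatorname{Log}[\eta(I_\eta - I)] = \operatorname{Log}(I_\eta) + \operatorname{Log} U(\lambda),
\]
where $\operatorname{Log} U(\lambda)$ means $f(U(\lambda))$ for $f(z) = \operatorname{Log}\frac{\eta z}{\eta - z} - \operatorname{Log}\frac{\eta}{\eta - z}$. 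This $f$ is a holomorphic logarithm of $z$ on a neighbourhood of $\sigma(U(\lambda))$, so $\mathcal{D} = \lambda^{-1} f(U(\lambda))$.

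This requires $0 \notin \sigma(U(\lambda))$, i.e.\ $U(\lambda)$ invertible, which is not automatic for $C^0$-semigroups. Otherwise the logarithm must be understood as an unbounded operator $\lambda^{-1}\operatorname{Log}$ defined via resolvents, and the assumption $0 \in \rho(D)$ is used there. I expect this to be the \textbf{main obstacle}. My first attempt would be to interpret $\operatorname{Log}[\eta(I_\eta - I)]$ through the Balakrishnan-type formula $\int_0^\infty [(s+T)^{-1} - (s+1)^{-1}]\,ds$ for sectorial $T = I_\eta U(\lambda)$, with $\eta$ chosen so that $T$ is sectorial. Otherwise I would restrict to the invertible case and argue by density.

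\textbf{Step 3: Identification with $D$.} For $u \in D(D)$, compare $\lambda^{-1} f(U(\lambda))$ with $D$. By the spectral mapping theorem for the point spectrum, $e^{\lambda \mu} \in \sigma_p(U(\lambda))$ for eigenvalues $\mu$ of $D$, and $f(e^{\lambda\mu}) = \lambda\mu$ modulo $2\pi i$. More rigorously, show that $\mathcal{D}$ commutes with $U(s)$ for all $s$. Then show $\exp(t\mathcal{D})$ agrees with $U(t)$ at $t = \lambda$, and, by the semigroup property and functional calculus, $\exp(\mathcal{D}\lambda/n) = U(\lambda/n)$ on a branch consistent with continuity in $\lambda$. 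Both sides are $C^0$-semigroups that agree on the dyadic multiples $\lambda/2^n$, so by strong continuity they coincide, and their generators agree: $\mathcal{D} = D$ on $D(D)$.

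Independence of $\lambda$ follows from the same identity, since the right-hand side $D$ does not depend on $\lambda$. The branch choice (principal value) must be justified by continuity in $\lambda$ from small $\lambda$, where $U(\lambda)$ is close to $I$ in the strong sense.

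The branch ambiguity of $2\pi i/\lambda$ is the delicate point. I would handle it by fixing principal values and requiring $\lambda$ small enough that $\lambda\,\sigma(D)$ lies in the strip $|\operatorname{Im}| < \pi$ where relevant, and then extending to larger $\lambda$ by the semigroup law.
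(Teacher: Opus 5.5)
Your outline has the same skeleton as the paper's proof: the identity $\eta(I_\eta-I)=U(\lambda)I_\eta$, splitting the logarithm, and reading $\lambda^{-1}\operatorname{Log}U(\lambda)$ as $D$. You have also found the weak point. The two steps you leave open cannot be closed, however, because the statement is false for unbounded $D$. The paper gets past them only by assertion: it claims $0\in\rho(D)$ forces $0\in\rho(U(\lambda))$ for small $\lambda$, and it differentiates $\operatorname{Log}U(\lambda)$ as if $U(\lambda)=e^{\lambda\mathcal{D}}$ were already known.

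\textbf{Step~2.} The hypothesis $0\in\rho(D)$ does not make $U(\lambda)$ invertible. Take the nilpotent left shift on $L^2(0,1)$, $(U(t)f)(x)=f(x+t)$ for $x+t\le 1$ and $0$ otherwise. Its generator is $f\mapsto f'$ on $\{f\in H^1(0,1): f(1)=0\}$, with $\sigma(D)=\emptyset$. Yet $U(\lambda)I_\eta$ is nilpotent for every admissible $\eta$, so it has spectrum $\{0\}$ and is not injective. Neither a Riesz--Dunford logarithm nor a Balakrishnan-type sectorial logarithm exists, and there is nothing to reach by density.

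Invertibility is not enough either. For $D=d/dx-I$ on $L^2(\mathbb{R})$ one has $\sigma(U(\lambda))=e^{-\lambda}\mathbb{T}$. For $|\eta|>r(U(\lambda))$, the map $z\mapsto \eta z/(\eta-z)$ sends the disc $|z|\le r(U(\lambda))$ onto a disc with $0$ in its interior. Hence $\sigma(\eta(I_\eta-I))$ encircles $0$, and no choice of $\eta$ gives a branch cut, contrary to your ``choose $\eta$ so that\ldots''.

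\textbf{Step~3.} This step fails even when Step~2 succeeds. The holomorphic calculus always produces a bounded operator. A bounded $\mathcal{D}$ that agrees with the closed operator $D$ on the dense set $D(D)$, or that generates $U$, forces $D$ to be bounded. Concretely, $\exp(\lambda\mathcal{D}/n)$ is the principal $n$-th root of $U(\lambda)$, which need not equal $U(\lambda/n)$.

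Take $D=d/d\theta-\tfrac12 I$ on $L^2(\mathbb{T})$, so $0\in\rho(D)$.
\begin{itemize}
\item At $\lambda=2\pi$: $U(2\pi)=e^{-\pi}I$, and the formula gives $\mathcal{D}=-\tfrac12 I$ for every real $\eta>e^{-\pi}$.
\item At $\lambda=2\pi/3$: $\sigma(U(\lambda))=e^{-\pi/3}\{1,e^{\pm 2\pi i/3}\}$ avoids $(-\infty,0]$, and your splitting is valid for large $\eta>0$. It yields $\mathcal{D}e^{in\theta}=(-\tfrac12+ir_n)e^{in\theta}$ with $r_n\in\{-1,0,1\}$ and $r_n\equiv n \pmod 3$.
\end{itemize}
In both cases $\mathcal{D}$ is bounded, depends on $\lambda$, and differs from $D$ on $e^{in\theta}$ for $|n|\ge 2$. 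Also $\exp(\lambda\mathcal{D}/2)\neq U(\lambda/2)$ on $e^{3i\theta}$.

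Your strip condition $\lambda\sigma(D)\subset\{|\operatorname{Im}z|<\pi\}$ cannot hold when $\sigma(D)$ is vertically unbounded, which is the typical situation for groups and hyperbolic problems. Your independence-of-$\lambda$ argument is circular, since it rests on $\mathcal{D}=D$.

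\textbf{What your Steps~1--2 do prove.} They prove the bounded case. Suppose $D\in B(X)$ and $\lambda\sigma(D)\subset\{|\operatorname{Im}z|<\pi\}$ (for instance $\lambda\|D\|<\pi$). Then:
\begin{itemize}
\item $\sigma(U(\lambda))=e^{\lambda\sigma(D)}$ avoids $(-\infty,0]$;
\item for large $\eta>0$ the splitting follows from the scalar identity on a neighbourhood of this spectrum;
\item the composition rule gives $\mathcal{D}=\lambda^{-1}(\operatorname{Log}\circ\exp)(\lambda D)=D$, with no dyadic argument needed.
\end{itemize}
Beyond this case you need extra hypotheses or an unbounded logarithm, e.g.\ spectral calculus for self-adjoint negative definite $D$. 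An unbounded logarithm is not the bounded $\mathcal{D}$ that the main theorem relies on.
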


\begin{proof}
The core principle of the construction of the logarithmic representation $\mathcal{D}$ is outlined (for the full technical details of the proof, refer to the literature~\cite{20iwata, 23iwata, 25iwata_kyoto}).
According to the arguments in Iwata~\cite{20iwata, 23iwata, 25iwata_kyoto}, the following relationship holds between the generator $D$ and the semigroup $U(\lambda)$ via the resolvent $I_{\eta}$:
\[
U(\lambda) 
= \exp(\lambda \mathcal{D})
= \exp \left( \operatorname{Log}[\eta(I_{\eta}-I)] - \operatorname{Log}(I_{\eta}) \right).
\]
Since $U(\lambda) = e^{\lambda \mathcal{D}}$, taking the logarithmic derivative with respect to $\lambda$ yields the constant generator $\mathcal{D} = \partial_{\lambda} \operatorname{Log}[U(\lambda)]$, which is intrinsically independent of $\lambda$. 
By inserting the identity operator $I = I_{\eta} I_{\eta}^{-1}$ into the expression $\partial_{\lambda} \operatorname{Log}[U(\lambda)]$, we obtain the following algebraic identity:
\begin{align*}
\mathcal{D} 
&= \partial_{\lambda} \operatorname{Log}[ U(\lambda) I_{\eta} I_{\eta}^{-1} ] \\
&= \partial_{\lambda} \operatorname{Log}[ U(\lambda) (I - \eta^{-1} U(\lambda))^{-1} I_{\eta}^{-1} ] \\
&= \partial_{\lambda} \operatorname{Log}[ \{ -(\eta I - U(\lambda)) + \eta I \} (I - \eta^{-1} U(\lambda))^{-1} I_{\eta}^{-1} ] \\
&= \partial_{\lambda} \operatorname{Log}[ \{ -\eta I + \eta (I - \eta^{-1} U(\lambda))^{-1} \} I_{\eta}^{-1} ] \\
&= \partial_{\lambda} \operatorname{Log}[ \eta (I_{\eta} - I) I_{\eta}^{-1} ].
\end{align*}
Since the right-hand side is independent of $\lambda$, integrating over $[0, \lambda]$ and decomposing the operator logarithm into a difference directly yields Eq.~\eqref{eq-simp01}:
\[
\mathcal{D} = \lambda^{-1} \left( \operatorname{Log}[\eta(I_{\eta}-I)] - \operatorname{Log}(I_{\eta}) \right).
\]
This explicitly demonstrates that $\mathcal{D}$ acts as a bounded operator on $X$ independent of $\lambda$ in the sense of operator equivalence (cf.~\cite{25iwata_kyoto}).
Here, the argument of the exponential function is given as the difference between logarithms of bounded operators (the alternative generators~\cite{17iwata-3}).
For the first term, since $0 \in \rho(D)$ guarantees $0 \in \rho(U(\lambda))$ for sufficiently small $\lambda > 0$, the operator $\eta(I_{\eta}-I) = U(\lambda) I_{\eta}$ is invertible ($0 \in \rho(\eta(I_{\eta}-I))$). 
Thus, by appropriately choosing $\eta \in \mathbb{C}$ and restricting $\lambda \in [a, b]$, its spectrum is situated strictly within the domain of the principal branch of the logarithm, ensuring that $\operatorname{Log}[\eta(I_{\eta}-I)]$ is well-defined via functional calculus.
From the perspective of operational calculus, this exponential mapping converges broadly and uniformly with respect to $\lambda$ in the operator norm, completely reproducing the analytical properties of the original semigroup $U(\lambda)$.
\end{proof}

It is worth emphasizing that even if the original generator $D$ is an unbounded operator on $X$, the redefined operator $\mathcal{D}$ is constructed as a bounded operator (acting as an alternative generator~\cite{17iwata-3}). 
Thus, while $D$ and $\mathcal{D}$ generate the same semigroup $U(\lambda)$, they may differ in their domain properties and boundedness.
Equipped with these foundational lemmas, we now proceed to the main section to construct the fractional powers of operators and establish their uniqueness.

\section{Main Theorem}
Based on the logarithmic representation of $D$ established in Lemma~\ref{lem03}, the fractional power $D^k$ can be naturally constructed in terms of spectral-analytic representation $\exp(k \operatorname{Log} \mathcal{D})$.
This approach bypasses traditional geometric limitations, enabling a rigorous and unified construction of fractional operator powers for a broader class of generators $D$ that are not necessarily restricted to generating analytic semigroups.

\begin{theorem} [$\delta$-Regularization of Operator] \label{thm-01}
Let $X$ be a Banach space, and let $\{U(\lambda)\}_{\lambda \ge 0}$ be a $C^0$-semigroup on $X$ generated by an infinitesimal generator $D$ whose resolvent set contains the origin ($0 \in \rho(D)$). 
For the unique solution $u(\lambda) = U(\lambda-a)u_a$ of the abstract Cauchy problem:
\begin{equation} \label{cauchy-main}
d u(\lambda) /d \lambda  = Du(\lambda), \quad \lambda \in (a, b], 
\quad  u(a) = u_a \in X. 
\end{equation}
Let $\mathcal{D}$ be the logarithmic representation of $D$ given by Lemma~\ref{lem03}:
\[
\mathcal{D} = \lambda^{-1} \left( \operatorname{Log}[\eta(I_{\eta}-I)] - \operatorname{Log}[I_{\eta}] \right),
\]
where $I_{\eta} = (I - \eta^{-1} U(\lambda))^{-1}$ for a sufficiently large parameter $\eta > 0$.

Then, for any $k \in \mathbb{R}$, the fractional power $D^k$ is well-defined via the $\delta$-regularized logarithmic representation $\mathcal{D}^k$ as
\begin{equation} \label{fractional-def}
\mathcal{D}^k = \exp \left[ k \left\{ \operatorname{Log} \left[ I - \delta (\mathcal{D} + \delta I)^{-1} \right] + \operatorname{Log} [\mathcal{D} + \delta I] \right\} \right],
\end{equation}
where $\delta > 0$ is a positive regularization parameter chosen such that the operator $(\mathcal{D} + \delta I)$ possesses a bounded inverse and the operator logarithms are uniquely defined within their principal branches.
\end{theorem}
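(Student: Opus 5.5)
The plan treats $\mathcal{D}$ as a bounded operator on $X$, as Lemma~\ref{lem03} provides, so that everything can be done inside the Riesz--Dunford holomorphic functional calculus on $B(X)$. The key algebraic identity is
\[
I - \delta(\mathcal{D}+\delta I)^{-1} = \mathcal{D}(\mathcal{D}+\delta I)^{-1}, \qquad\text{hence}\qquad \mathcal{D} = \left[ I - \delta(\mathcal{D}+\delta I)^{-1}\right](\mathcal{D}+\delta I).
\]
The two factors are functions of the single operator $\mathcal{D}$, so they commute. I would then work through four steps.

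\textbf{Step 1: the logarithms exist.} The hypothesis $0\in\rho(D)$, carried over to $\mathcal{D}$ through Lemma~\ref{lem03} (equivalently through invertibility of $\eta(I_\eta - I)I_\eta^{-1}$), gives $0\notin\sigma(\mathcal{D})$. Since $\sigma(\mathcal{D})$ is compact, I choose $\delta>0$ so that $-\delta\notin\sigma(\mathcal{D})$ and the spectra
\[
\sigma(\mathcal{D}+\delta I)=\sigma(\mathcal{D})+\delta, \qquad \sigma\!\left(\mathcal{D}(\mathcal{D}+\delta I)^{-1}\right)=\{z/(z+\delta): z\in\sigma(\mathcal{D})\}
\]
both avoid the cut $(-\infty,0]$. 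This uses the spectral mapping theorem. The principal logarithms $L_1=\operatorname{Log}[I-\delta(\mathcal{D}+\delta I)^{-1}]$ and $L_2=\operatorname{Log}[\mathcal{D}+\delta I]$ are then well defined by Dunford integrals. Because they are holomorphic functions of $\mathcal{D}$, they commute.

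\textbf{Step 2: the definition is consistent.} Since $L_1$ and $L_2$ commute,
\[
e^{k(L_1+L_2)}=e^{kL_1}e^{kL_2}.
\]
For $k=1$ this equals $\mathcal{D}$, by the factorization above and the rule $\exp\operatorname{Log}=\mathrm{id}$ in the functional calculus. For integer $k$ it equals $\mathcal{D}^k$. This justifies writing the left side of \eqref{fractional-def} as $\mathcal{D}^k$, and it yields the group law $\mathcal{D}^{k}\mathcal{D}^{l}=\mathcal{D}^{k+l}$.

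\textbf{Step 3: independence of $\delta$ (the main obstacle).} Define
\[
f_\delta(z)=\operatorname{Log}\frac{z}{z+\delta}+\operatorname{Log}(z+\delta),
\]
holomorphic on a neighbourhood of $\sigma(\mathcal{D})$. The functional calculus gives $L_1+L_2=f_\delta(\mathcal{D})$. Scalarly, $f_\delta(z)-\operatorname{Log} z\in 2\pi i\mathbb{Z}$, and this value is locally constant in $(z,\delta)$. It vanishes for $z$ near the positive axis, so on each connected component of the admissible region it is constant. The goal is to show it equals $0$ on all of $\sigma(\mathcal{D})$ for every admissible $\delta$. Then $f_\delta(\mathcal{D})=\operatorname{Log}\mathcal{D}$, and $\mathcal{D}^k=\exp(k\operatorname{Log}\mathcal{D})$ does not involve $\delta$. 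The difficulty is that the admissible $\delta$-set may be disconnected, and components of $\sigma(\mathcal{D})$ may lie where the argument sum wraps around. My first attempt would restrict the admissible $\delta$ to those for which $\arg z+\arg\bigl(1-\delta/(z+\delta)\bigr)$ stays in $(-\pi,\pi)$ for every $z\in\sigma(\mathcal{D})$. I would show this set is an interval in $\delta$, via continuity in $\delta$ together with the compactness of $\sigma(\mathcal{D})$. The integer-valued function $\delta\mapsto (2\pi i)^{-1}(f_\delta(\mathcal{D})-\operatorname{Log}\mathcal{D})$ is then continuous in the operator norm, since it is obtained from Dunford integrals that are continuous in $\delta$. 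Being integer-valued on an interval, it is constant, and its value is read off in the limit $\delta\to 0^+$, where $L_1\to 0$ and $L_2\to\operatorname{Log}\mathcal{D}$.

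\textbf{Step 4: identification with $D^k$.} Finally I would invoke Lemma~\ref{lem03}, which says $\mathcal{D}$ coincides with $D$ on $D(D)$. This gives the interpretation of $\mathcal{D}^k$ as the fractional power $D^k$, and for integer $k$ agreement with $D^k$ on $D(D^k)$.
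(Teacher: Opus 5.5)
Your outline matches the paper's proof. It uses the same factorization $\mathcal{D}=[I-\delta(\mathcal{D}+\delta I)^{-1}](\mathcal{D}+\delta I)$, the Riesz--Dunford calculus for the bounded $\mathcal{D}$, and the reduction of the exponent to $k\operatorname{Log}\mathcal{D}$. You rightly identify the additivity $L_1+L_2=\operatorname{Log}\mathcal{D}$ as the crux. The paper only asserts it via ``the logarithmic property for commuting operators'', which fails in general even for scalars. Your Step~3, however, does not close this gap:
\begin{itemize}
\item You claim the restricted $\delta$-set is an interval ``by continuity and compactness''. That is unsupported: those give openness, not connectedness.
\item Restricting $\delta$ would prove less than the theorem asserts.
\item On your restricted set, $f_\delta=\operatorname{Log}$ already holds pointwise near $\sigma(\mathcal{D})$. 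So the operator-norm continuity and the limit $\delta\to0^+$ do no work.
\end{itemize}
Two minor points: the argument sum should be $\operatorname{Arg}(z+\delta)+\operatorname{Arg}\bigl(z/(z+\delta)\bigr)$, not $\arg z+\cdots$. Also, $(2\pi i)^{-1}(f_\delta(\mathcal{D})-\operatorname{Log}\mathcal{D})$ is an integer combination of Riesz projections, not an integer.

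The missing step is to compute where $f_\delta$ is defined. For $\delta>0$, $z+\delta\in(-\infty,0]$ iff $z\in(-\infty,-\delta]$, and $z/(z+\delta)\in(-\infty,0]$ iff $z\in(-\delta,0]$. Hence both principal logarithms are defined exactly on $\Omega=(\mathbb{C}\setminus(-\infty,0])\times(0,\infty)$, which is connected. Your own observation then finishes the job: $f_\delta(z)-\operatorname{Log}z$ is continuous, $2\pi i\mathbb{Z}$-valued and zero at $z=1$, so $f_\delta\equiv\operatorname{Log}$ on all of $\Omega$. Directly: for $\operatorname{Im}z>0$ one has $0<\operatorname{Arg}(z+\delta)<\operatorname{Arg}z$, so no wrap occurs. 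The composition rule then gives $L_1+L_2=f_\delta(\mathcal{D})=\operatorname{Log}\mathcal{D}$ for every admissible $\delta$.

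The same computation corrects your Step~1. An admissible $\delta$ does not follow from $0\notin\sigma(\mathcal{D})$ and compactness: if $-1\in\sigma(\mathcal{D})$, no $\delta>0$ works. Admissibility is equivalent to $\sigma(\mathcal{D})\cap(-\infty,0]=\emptyset$, independent of $\delta$. So the admissible set is either empty or all of $(0,\infty)$, and your disconnectedness worry is vacuous. This condition must be taken as the theorem's hypothesis, not derived from $0\in\rho(D)$; the paper's ``by selecting $\delta$ appropriately'' has the same flaw.
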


\begin{proof}
The validity of $\delta$-regularization is established.
In particular, starting from the logarithmic representation of the bounded operator $\mathcal{D}$ constructed in Lemma~\ref{lem03}, the definition of the fractional power operator $\mathcal{D}^k$ is shown to be uniquely determined, completely independent of the choice of the regularization parameter $\delta > 0$, and well-defined on $X$.\vspace{2mm}\\
\noindent\textbf{Step 1 (Algebraic Setup via $\delta$-Regularization).}
In general, the infinitesimal generator $D$ of a $C^0$-semigroup is unbounded, and domain-related analytical constraints pose severe difficulties when directly defining its operator logarithm or fractional powers.
To overcome this issue, an algebraic $\delta$-regularization with a parameter $\delta > 0$ is applied to the family of bounded operators. 
Specifically, by exploiting the formal algebraic identity
\begin{equation} \label{algebraic-decomp}
\mathcal{D} = (\mathcal{D} + \delta I) \left[ I - \delta (\mathcal{D} + \delta I)^{-1} \right],
\end{equation}
the $\delta$-regularization avoids directly manipulating $\mathcal{D}$, which may exhibit unbounded nature or spectral singularities, by reconstructing the fractional power operators with the shifted operator $\mathcal{D} + \delta I$ as the fundamental generating unit.\vspace{2mm}\\
\noindent\textbf{Step 2 (Well-Definedness via Functional Calculus).} 
By assumption, the regularization parameter $\delta > 0$ is chosen such that $(\mathcal{D} + \delta I)$ possesses a bounded inverse. 
Since $\mathcal{D}$ is a bounded operator, the spectral mapping theorem implies that the spectrum $\sigma(\mathcal{D} + \delta I)$ is a compact set in the complex plane. 
Furthermore, by selecting $\delta$ appropriately, this spectrum is strictly contained in the domain of the principal branch of the logarithm ($\mathbb{C} \setminus (-\infty, 0]$). 
Consequently, by the Dunford--Riesz functional calculus for bounded operators, $\operatorname{Log} [\mathcal{D} + \delta I]$ is uniquely and well-defined as a bounded operator in $B(X)$, and its spectrum $\sigma(\operatorname{Log}[\mathcal{D} + \delta I])$ is also bounded. 
Similarly, applying the exponential mapping, the spectral mapping theorem guarantees that the operator $\exp(k \operatorname{Log} [\mathcal{D} + \delta I])$ remains well-defined and bounded in $B(X)$ for any $k \in \mathbb{R}$.
\vspace{2mm}\\
\noindent\textbf{Step 3 (Independence of $\delta$ and Conclusion).} 
The first term inside the exponential function in the definition \eqref{fractional-def} is examined:
\[ A_\delta := I - \delta (\mathcal{D} + \delta I)^{-1}.\]
Due to the boundedness of $(\mathcal{D} + \delta I)^{-1}$, $A_\delta$ is a bounded operator on $X$, and simple algebraic manipulation yields $A_\delta = \mathcal{D}(\mathcal{D} + \delta I)^{-1}$. 
According to the spectral mapping theorem, its spectrum is given by
\[
\sigma(A_\delta) = \left\{ \frac{z}{z + \delta} \;\middle|\; z \in \sigma(\mathcal{D}) \right\}.
\]
Since $0$ belongs to the resolvent set $\rho(\mathcal{D})$ by assumption, it follows that $0 \in \rho(A_\delta)$. 
This guarantees that $\sigma(A_\delta)$ is bounded away from the origin and lies strictly within the domain of the principal branch of the logarithm. 
Consequently, by the logarithmic property for commuting operators, the operational logarithm in Eq.~\eqref{fractional-def} simplifies to
\[ 
k \left\{ \operatorname{Log} \left[ \mathcal{D}(\mathcal{D} + \delta I)^{-1} \right] + \operatorname{Log} [\mathcal{D} + \delta I] \right\} = k \operatorname{Log} \mathcal{D}. 
\]
This demonstrates that the fractional power operator $\mathcal{D}^k$ is determined solely by the intrinsic properties of $\mathcal{D}$, completely independent of the choice of the regularization parameter $\delta > 0$. \vspace{2mm} \\
As a result, the fractional power operator $\mathcal{D}^k$ is proven to be uniquely well-defined as a bounded operator on $X$, and can be consistently applied to the solution $u(\lambda)$ of the abstract Cauchy problem.\vspace{2mm}\\
\end{proof}

\begin{remark}[On the Invertibility Assumption $0 \in \rho(D)$ and Recovery of $D^k$] \label{rem-resolvent}
The condition $0 \in \rho(D)$ in Theorem~\ref{thm-01} is standard in fractional power theory and does not impose any structural limitation on the applicability of our construction. 
Indeed, if $0 \in \sigma(D)$, we can consider the shifted generator $D_\mu := D - \mu I$ for a suitable shift parameter $\mu \in \rho(D)$ such that $0 \in \rho(D_\mu)$. 
Since $D_\mu$ also generates a well-defined $C^0$-semigroup $e^{-\mu \lambda} U(\lambda)$, the entire $\delta$-regularization procedure established in Theorem~\ref{thm-01} applies directly to yield the fractional power $(D-\mu I)^k = \mathcal{D}_\mu^k$.

Furthermore, the fractional power $D^k$ of the original non-invertible operator $D$ itself can be rigorously recovered from $\mathcal{D}_\mu^k$. 
Specifically, by choosing a sufficiently large shift parameter $\mu \in \rho(D)$ such that $\|\mu \mathcal{D}_\mu^{-1}\| < 1$, $D^k$ is explicitly defined via the generalized binomial expansion:
\begin{equation} \label{eq-recov}
D^k := \mathcal{D}_\mu^k \left( I + \mu \mathcal{D}_\mu^{-1} \right)^k = \mathcal{D}_\mu^k \sum_{j=0}^{\infty} \binom{k}{j} \mu^j \mathcal{D}_\mu^{-j},
\end{equation}
where the series converges in the operator norm $B(X)$ due to the boundedness of $\mathcal{D}_\mu^{-1}$. 
Alternatively, utilizing the functional calculus for the bounded representation $\mathcal{D}_\mu$, $D^k$ is uniquely represented by the Cauchy contour integral over a bounded contour $\Gamma_\mu$ enclosing $\sigma(\mathcal{D}_\mu)$:
\begin{equation}
D^k = \frac{1}{2\pi i} \int_{\Gamma_\mu} (z + \mu)^k (z I - \mathcal{D}_\mu)^{-1} dz.
\end{equation}
Hence, the non-invertibility of $D$ presents no obstacle, and the fractional power $D^k$ itself remains fully accessible and rigorously well-defined within our logarithmic framework.
\end{remark}

\section{Discussion and Comparison with Classical Theories}
Let us discuss the relationship between our main results and existing theories. 
Under the classical assumption that $-D$ is a non-negative sectorial operator, the logarithmic representation of the generator is known to be formally associated with an integral representation.
For instance, based on the functional calculus for the operator logarithm, we might formally consider an integral representation for the generator $D$ via the family of generalized resolvents $I_\eta(\lambda)$:
\begin{equation} \label{rem-01}
D = \lambda^{-1} \lim_{\epsilon \to 0^+} \left( \int_{0}^{1-\epsilon} + \int_{1+\epsilon}^{\infty} \right) \left( \frac{1}{1 - \eta} I - (\eta I - U(\lambda))^{-1} \right) d\eta.
\end{equation}
Here the improper integral is split at the singularity $\eta = 1$ (inherently associated with $\log 1 = 0$), where the limit is taken in the sense of the Cauchy principal value around the singularity $\eta = 1$.
Note that the second term in the integrand is directly expressed as $(\eta I - U(\lambda))^{-1} = \eta^{-1} I_\eta(\lambda)$. 
This formulation describes the inherently unbounded properties of $D$ through an integral of $I_\eta(\lambda)$, which is deeply connected with the classical theory of fractional powers established by V. Balakrishnan~\cite{60Balakrishnan}.
Here, Eq.~\eqref{rem-01} formally suggests the relation $D = \lambda^{-1} \operatorname{Log} U(\lambda)$ with respect to the semigroup $U(\lambda) = \exp(\lambda D)$.

The definition of the fractional power operator $D^k$ presented in this paper shares the same foundation as this classical integral representation, but it can be interpreted as a unified extension to the generators of more general $C^0$-semigroups through the complex-analytic logarithmic representation $\mathcal{D}$ for bounded operators.
From a computational viewpoint, especially in numerical analysis and when handling algebraic products of operators (such as the Baker--Campbell--Hausdorff (BCH) formula; e.g., see \cite{26iwata_bch}), the presence of improper infinite integrals introduces significant challenges regarding operational stability and convergence.

Furthermore, classical formulations, whether via Balakrishnan's integral or the standard Dunford integral
\begin{equation} \label{dunford-int}
D^{-k} = \frac{1}{2\pi i} \int_{\Gamma} z^{-k} (zI - D)^{-1} dz,
\end{equation}
inherently require the generator $D$ to satisfy the sectorial condition so that the integration contour $\Gamma$ can avoid the branch cut. 
This strictly limits their applicability to analytic semigroups. 
Even from a logarithmic perspective, a naive definition such as $D^k = \exp(k \operatorname{Log} \, D)$ fails immediately due to the unbounded domain restriction of $\operatorname{Log} D$ and the necessity of sectorial spectral constraints for constructing the exponential mapping.

In contrast, the present definition \eqref{fractional-def} of the fractional power $\mathcal{D}^k$ introduced in Theorem~\ref{thm-01} completely circumvents these geometric and operational obstacles. 
By utilizing the $\delta$-regularization, the problem is mapped into the framework of bounded operators, where the spectrum can be algebraically shifted into the holomorphic domain of the principal logarithm. 
Consequently, even if the generator $D$ belongs to a general $C^0$-semigroup (such as those appearing in hyperbolic equations or delay differential equations) where the sectorial condition is violated, the fractional powers are uniquely, rigorously, and stably well-defined. 
This highlights the fundamental novelty and computational advantages of the proposed algebraic $\delta$-regularization over conventional integral-based approaches.

\section{Conclusion}
In this paper, a novel formulation of the fractional power operator $\mathcal{D}^k$ for the generators of $C^0$-semigroups via the complex-analytic logarithmic representation $\mathcal{D}$ is presented. 
Our main results are summarized as follows:
\begin{itemize}
\item \textbf{Formulation via an external parameter}: An evolution parameter $\lambda$ is introduced, independent of the time variable $t$, to define the fractional power operators. 
This establishes a versatile and consistent framework for defining operator powers through this external parameter, regardless of whether the system is time-dependent.
\item \textbf{Elimination of geometric constraints}: By utilizing an algebraic $\delta$-regularization for bounded operators, we successfully bypassed the geometric spectral constraints, such as the sectorial and accretivity conditions, that were indispensable in conventional theories like Balakrishnan's classical approach and Okazawa's logarithmic operator theory.
\item \textbf{Establishment of mathematical rigor}: The proposed definition of fractional powers was rigorously proven to be unique, well-defined on the underlying Banach space, and completely independent of the choice of the regularization parameter $\delta > 0$.
\item \textbf{Broadening of applicability}: The developed method enables the definition of fractional powers for a broad class of $C^0$-semigroups beyond traditional analytic semigroups. This offers a novel analytical foundation for regularity estimates in non-autonomous abstract evolution equations (e.g., Kato~\cite{93kato}) and for the stability analysis of numerical discretization schemes.
\end{itemize}
Future work will focus on applying the present theoretical framework to the construction of fundamental solutions for specific non-autonomous evolution equations, as discussed in Tanabe~\cite{79tanabe}, to investigate the extent to which conventional constraints on the smoothness of operator domains can be relaxed.

\section*{Acknowledgments}
This research was supported by Kyoto University.

\end{CJK}
\end{document}